\newtheorem{theorem}{Theorem}[section]
\newtheorem{proposition}[theorem]{Proposition}
\newtheorem{lemma}[theorem]{Lemma}
\newtheorem{corollary}[theorem]{Corollary}
\theoremstyle{definition}
\newtheorem{example}[theorem]{Example}
\newtheorem{remark}[theorem]{Remark}
\newcommand{\ZZ}{ \ensuremath{\mathbb{Z}}}
\newcommand{\PP}{\mathbb{P}}
\newcommand{\rank}{\ensuremath{\mathrm{rank}}\hspace{1pt}}
\newcommand{\Ker}{\ensuremath{\mathrm{Ker}}\hspace{1pt}}
\newcommand{\mideal}{\ensuremath{\mathfrak{m}}}
\def\cocoa{{\hbox{\rm C\kern-.13em o\kern-.07em C\kern-.13em o\kern-.15em A}}}
\newcommand{\MM}{\mathcal{M}}
\begin{document}

\title{On ideals with the Rees property}

\author[J. Migliore]{Juan Migliore}
\address{
Department of Mathematics, University of Notre Dame, Notre Dame, IN 46556, USA
}
\email{Juan.C.Migliore.1@nd.edu
}

\author[R.M. Mir\'o-Roig]{Rosa M. Mir\'o-Roig}
\address{
Facultat de Matem\`atiques, Department df\`Algebra i Geometria, Gran Via des les Corts
Catalanes 585, 08007 Barcelona, SPAIN
}
\email{miro@ub.edu}

\author[S. Murai]{Satoshi Murai}
\address{
Department of Mathematical Science,
Faculty of Science,
Yamaguchi University,
1677-1 Yoshida, Yamaguchi 753-8512, Japan.
}
\email{murai@yamaguchi-u.ac.jp}

\author[U. Nagel]{Uwe Nagel}
\address{
Department of Mathematics, University of Kentucky, 715 Patterson Office Tower,
Lexington, KY 40506-0027, USA
}
\email{uwe.nagel@uky.edu}

\author[J. Watanabe]{Junzo Watanabe}
\address{
Department of Mathematics,
Tokai University,
Hiratsuka 259-1292, Japan
}
\email{watanabe.junzo@tokai-u.jp}

\thanks{
Part of the work for this paper was done while the first author was sponsored by the National Security Agency under Grant Number H98230-12-1-0204.
Part of the work for this paper was done while the second author was sponsored by the Grant MTM2010-15256.
Part of the work for this paper was done while the third author was sponsored by KAKENHI 22740018.
Part of the work for this paper was done while the fourth author was sponsored by the National Security Agency under Grant Number H98230-12-1-0247.
This work was started at the workshop ``Aspects of SLP and WLP,"
held at Hawaii Tokai International College (HTIC) in September 2012,
which was financially supported by JSPS KAKENHI 24540050.
The authors thank HTIC for their kind hospitality.
Also, we would like to thank Mats Boij for valuable discussions at the workshop.}
\subjclass[2010]{13F20,13C05,13D40,06A07}

\begin{abstract}
A homogeneous ideal $I$ of a polynomial ring $S$ is said to have the Rees property if,
for any homogeneous ideal $J \subset S $ which contains $I$, the number of generators of $J$ is smaller than or equal to that of $I$.
A  homogeneous ideal $I \subset S$ is said to be $\mathfrak m$-full if $\mathfrak mI:y=I$ for some $y \in \mathfrak m$,
where $\mathfrak m$ is the graded maximal ideal of $S$.
It was proved by one of the authors that $\mathfrak m$-full ideals have the Rees property
and that the converse holds in a polynomial ring with two variables.
In this note, we give examples of ideals which have the Rees property but are not $\mathfrak m$-full
in a polynomial ring with more than two variables.
To prove this result, we also show that every Artinian monomial almost complete intersection in three variables has the Sperner property.
\end{abstract}

\maketitle

\section{Introduction}
Let $(R,\mideal,K)$ be a local ring.
An ideal $I \subset R$ is said to have the {\em Rees property} if, for any ideal $J \supset I$,
one has $\mu(J) \leq \mu(I)$,
where $\mu(L)=\dim_K L/\mideal L$ denotes the number of minimal generators of an ideal $L \subset R$.
An ideal $I \subset R$ is said to be {\em $\mideal$-full} if there is an element $y \in \mideal$ such that $\mideal I : y = I$.
It was proved in \cite[Theorems 3 and 4]{Wa} that $\mideal$-full ideals have the Rees property,
and that the converse is also true if $R$ is a regular local ring of dimension $2$.
It was asked by Harima et al.\ \cite[Question 2.56]{Book} if, for a regular local ring $R$ of characteristic $0$,
ideals with the Rees property are precisely $\mideal$-full ideals.
The purpose of this note is to prove that, in a polynomial ring with more than $2$ variables, there are infinitely many ideals which have the Rees property but are not $\mideal$-full.

The Rees property and the $\mideal$-full property have close connections to the Sperner property and the weak Lefschetz property.
Let $S=K[x_1,x_2,\dots,x_n]$ be a standard graded polynomial ring and $\mideal=(x_1,\dots,x_n)$ the graded maximal ideal of $S$.
The Rees property and m-fullness are defined analogously for homogenous  ideals $I$ of $S$.
An Artinian graded $K$-algebra $A=S/I$ is said to have the {\em Sperner property} if
$$ \max  \{ \mu(J): J \mbox{ is an ideal of }A\}= \max \{ \dim_K A_k: k \in \ZZ_{\geq 0}\}.$$
Note that, in general, the left-hand side in the above equation is larger than or equal to the right-hand side
since $\max \{ \dim_K A_k: k \in \ZZ_{\geq 0}\}=\max\{\mu (\mideal^k A): k \in \ZZ_{\geq 0} \} $.
This notion comes from the Sperner property in the theory of partially ordered sets.
See \cite[\S 1]{Book} for details.
An Artinian graded $K$-algebra $A$ is said to have the {\em weak Lefschetz property} ({\em WLP} for short)
if there is a linear form $y \in A$ such that the multiplication
$$\times y : A_{k-1} \to A_k$$
is either injective or surjective for all $k$.
It is known that the WLP implies the Sperner property (see \cite[Proposition 3.6]{Book}).
On the other hand,
we prove in Proposition \ref{lemD} that
if $A=S/I$ has the Sperner property and $p$ is an integer with $\dim_K A_p = \max \{ \dim_K A_k: k \in \ZZ_{\geq 0}\}$,
then the ideal $I+\mideal^p$ has the Rees property.
Moreover, we prove that if there is an integer $s <p$ such that $I$ has no generators of degree $s+1$ and
the multiplication $\times y : A_s \to A_{s+1}$ is not injective for any linear form $y$,
then the ideal $I+\mideal^p$ is not $\mideal$-full.
By virtue of this result, to find ideals which have the Rees property but are not $\mideal$-full,
we may study Artinian graded $K$-algebras which have the Sperner property but do not have the WLP because of the failure of the injectivity.
To find such graded $K$-algebras, we consider monomial almost complete intersections.

For integers $a,b,c,\alpha,\beta,\gamma \in \ZZ_{\geq 0}$,
let
\begin{align}
\label{1}
I_{a,b,c,\alpha,\beta,\gamma}=(x_1^a, x_2^b, x_3^c, x_1^\alpha x_2^\beta x_3^\gamma) \subset K[x_1,x_2,x_3],
\end{align}
where $\alpha<a, \beta<b, \gamma<c$ and where at least two of $\alpha,\beta,\gamma$ are nonzero.
We call the ideal $I_{a,b,c,\alpha,\beta,\gamma}$
a {\em monomial almost complete intersection ideal} in $K[x_1,x_2,x_3]$.
About these ideals, we first prove the following result.

\begin{theorem}
\label{thm1}
The $K$-algebra $K[x_1,x_2,x_3]/I_{a,b,c,\alpha,\beta,\gamma}$ has the Sperner property.
\end{theorem}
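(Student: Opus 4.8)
The plan is to reduce the statement to a combinatorial assertion about a poset and then to prove that assertion by an explicit chain decomposition. Write $A = K[x_1,x_2,x_3]/I_{a,b,c,\alpha,\beta,\gamma}$ and $M = \max\{\dim_K A_k : k \ge 0\}$. As noted in the introduction, the inequality ``$\ge$'' in the definition of the Sperner property holds automatically, so it suffices to prove $\mu(J) \le M$ for every ideal $J \subseteq A$. The first step is to reduce to the case that $J$ is a monomial ideal: lifting $J$ to an ideal $\widetilde J \supseteq I_{a,b,c,\alpha,\beta,\gamma}$ of $S = K[x_1,x_2,x_3]$ and degenerating $\widetilde J$ to an initial ideal by a suitable one-parameter subgroup of $GL_3(K)$ --- which, acting diagonally, fixes the monomial ideal $I_{a,b,c,\alpha,\beta,\gamma}$ --- produces a flat family of ideals of $A$ whose special fibre is a monomial ideal; since $\mu(J) = \dim_K \Tor_1^A(A/J,K)$ is upper semicontinuous along the family, the maximum of $\mu$ over all ideals of $A$ is already attained by a monomial ideal.

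Next, let $P$ be the poset of monomials of $S$ not lying in $I_{a,b,c,\alpha,\beta,\gamma}$ --- equivalently, the standard monomial $K$-basis of $A$ --- ordered by divisibility. Concretely $P$ is the box $\{x_1^i x_2^j x_3^k : i<a,\ j<b,\ k<c\}$ with the corner sub-box $\{x_1^i x_2^j x_3^k : \alpha\le i<a,\ \beta\le j<b,\ \gamma\le k<c\}$ removed; this sub-box is precisely the set of multiples of $x_1^\alpha x_2^\beta x_3^\gamma$ lying in the box, hence an order filter, so $P$ is ``a box minus a corner.'' Monomial ideals of $A$ correspond bijectively to order filters of $P$, and the number of minimal generators of such an ideal equals the number of minimal elements of the corresponding filter; hence $\max\{\mu(J) : J \text{ a monomial ideal}\}$ is the maximal size of an antichain of $P$. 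Since the fibre over $k$ of the total-degree function on $P$ has exactly $\dim_K A_k$ elements (and is itself an antichain), Theorem~\ref{thm1} is now equivalent to the order-theoretic statement that no antichain of $P$ has more than $M$ elements; by Dilworth's theorem this amounts to covering $P$ by $M$ chains. At this point one may use the symmetry under permuting $x_1,x_2,x_3$, together with the hypothesis that at least two of $\alpha,\beta,\gamma$ are nonzero, to normalize the parameters --- say so that $\gamma$ is smallest.

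To build the chain cover I would slice $P$ by the exponent of $x_3$: $P = \bigsqcup_{k=0}^{c-1} L_k$, with $x_1^i x_2^j x_3^k$ covered by $x_1^i x_2^j x_3^{k+1}$, where $L_k$ is the full $a \times b$ rectangle of monomials in $x_1, x_2$ for $k < \gamma$ and the planar staircase $\{x_1^i x_2^j : i<a,\ j<b,\ i<\alpha \text{ or } j<\beta\}$ for $k \ge \gamma$ (when $\gamma = 0$ all slices are the staircase, and $P$ is simply the product of the staircase with a chain). Starting from a symmetric chain decomposition of the rectangle --- the classical product-of-two-chains construction --- I would propagate chains upward through the slices as in the inductive construction of symmetric chain decompositions of products of chains, modified at the level $k = \gamma$ where the cross-section contracts from the rectangle to the staircase. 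The main obstacle is precisely this contraction: a propagated chain whose top element would leave the removed corner must terminate there, and to keep the number of chains equal to $M$ --- rather than the strictly larger $\max_k \dim_K \bigl(S/(x_1^a,x_2^b,x_3^c)\bigr)_k$ that naive propagation would produce --- one must re-route or amalgamate the surviving chain-segments as they cross the contraction. Making this precise, i.e.\ checking that the profile $k \mapsto \dim_K A_k$ still governs the width across the cross-sectional jump, requires a careful accounting of chain endpoints, using that the staircase itself is a planar poset of a simple type (a union of two rectangles) amenable to the same constructions and that $\dim_K A_k$ is eventually nonincreasing; this bookkeeping is where the real work of the proof lies.
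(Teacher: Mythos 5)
Your overall strategy --- reduce to monomial ideals via a Gr\"obner degeneration, translate the Sperner property into a statement about antichains in the poset $P$ of standard monomials (``a box minus a corner''), and then bound the width by exhibiting a cover of $P$ by $M$ chains --- is a legitimate and genuinely different route from the paper, which instead verifies unimodality plus a full matching in every degree, using semistability of the syzygy bundle and the Grauert--M\"ulich theorem to get injectivity/surjectivity of $\times(x_1+x_2+x_3)$ away from the critical degree $s+1$, and quoting a nontrivial result of Cook--Nagel for the full matching at degree $s+1$ itself. The reduction steps in your first two paragraphs are fine (and your combinatorial reformulation even avoids the characteristic-zero reduction the paper needs).

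The problem is that the proof stops exactly where the theorem becomes hard. Your chain-building step --- propagating a symmetric chain decomposition of the $a\times b$ rectangle through the slices and ``re-routing or amalgamating'' chain segments at the level $k=\gamma$ where the cross-section contracts to the staircase --- is only sketched, and you say yourself that this bookkeeping ``is where the real work of the proof lies.'' That is not a routine verification: naively truncating a symmetric chain decomposition of the full box at the removed corner splits chains and produces strictly more than $M$ of them, and the assertion that one can always amalgamate down to exactly $M$ chains is essentially equivalent to the existence of a full matching at the critical degree $s+1$ in the cases where the WLP fails (e.g.\ $I_{k,\alpha}=(x_1^k,x_2^k,x_3^k,x_1^\alpha x_2^\alpha x_3^\alpha)$, where the Hilbert function has a flat top at $s,s+1$ and no linear form gives an injective map $A_s\to A_{s+1}$). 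In the paper this is precisely the point that cannot be handled by soft arguments and requires Lemma \ref{lemC}(iii), i.e.\ Cook--Nagel's matching result proved by enumerative methods. So as written the proposal has a genuine gap: the decisive combinatorial construction across the contraction is asserted, not proved, and there is no reason given why the width across that jump is governed by $\dim_K A_k$ rather than by the larger ranks of the ambient box.
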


It was proved in \cite{CN} and \cite{MMN} that if $A=K[x_1,x_2,x_3]/I_{a,b,c,\alpha,\beta,\gamma}$ fails to have the WLP in characteristic $0$,
then $a+b+c+\alpha + \beta + \gamma$ is divisible by $3$
and the multiplication
$$\times y : A_{\frac 1 3 (a+b+c+\alpha + \beta + \gamma)-2} \to A_{\frac 1 3 (a+b+c+\alpha + \beta + \gamma)-1}$$
is not injective for any linear form $y$ (see Lemma \ref{lemC}).
By using this failure of the injectivity,
we prove the following theorem.

\begin{theorem}
\label{thm2}
If $K[x_1,x_2,x_3]/I_{a,b,c,\alpha,\beta,\gamma}$ does not have the WLP and $I_{a,b,c,\alpha,\beta,\gamma}$ has no generators of degree $s+1$,
where $s=\frac 1 3 (a+b+c+\alpha + \beta + \gamma)-2$,
then the ideal $I_{a,b,c,\alpha,\beta,\gamma}+ \mideal^{s+1}$ has the Rees property but is not $\mideal$-full.
\end{theorem}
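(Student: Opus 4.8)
The plan is to deduce Theorem \ref{thm2} by combining Theorem \ref{thm1} with the general Proposition \ref{lemD} mentioned in the introduction, using the description (from Lemma \ref{lemC}, quoting \cite{CN} and \cite{MMN}) of exactly where the weak Lefschetz property fails. Write $A = K[x_1,x_2,x_3]/I_{a,b,c,\alpha,\beta,\gamma}$ and $d = a+b+c+\alpha+\beta+\gamma$. Since $A$ fails the WLP, Lemma \ref{lemC} tells us that $3 \mid d$, so $s = \frac{1}{3}d - 2$ is a nonnegative integer, and that the multiplication map $\times y \colon A_s \to A_{s+1}$ is not injective for every linear form $y$. Set $p = s+1 = \frac{1}{3}d - 1$; note $s < p$, which is the inequality needed to invoke the non-$\mideal$-full criterion.

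The first key step is to check that $p$ is an index where the Hilbert function of $A$ attains its maximum, i.e.\ $\dim_K A_p = \max\{\dim_K A_k : k \geq 0\}$. This is where I would use the symmetry of the Hilbert function of an almost complete intersection. An almost complete intersection $A$ in three variables is, up to the socle-type adjustment, close to being a complete intersection; more precisely one can compute the Hilbert series of $A$ from the four generator degrees and the known minimal free resolution (the ideal $I_{a,b,c,\alpha,\beta,\gamma}$ is a codimension-three almost complete intersection, so its resolution has length $3$ and is governed by the Buchsbaum--Eisenbud structure). The resulting Hilbert function is unimodal and symmetric about the midpoint, and a direct computation identifies the center: the socle degree of the "complete intersection part" $(x_1^a,x_2^b,x_3^c)$ is $a+b+c-3$, and subtracting the contribution of the fourth generator shifts the peak so that the maximal values of $\dim_K A_k$ occur exactly around $k = \lfloor (d-3)/3 \rfloor$ and $k = \lceil (d-3)/3 \rceil$; since $3 \mid d$ these coincide with $p-1$ and $p$. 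In any case, the failure of injectivity of $\times y \colon A_s \to A_{s+1}$ for all $y$, together with the fact that the generic $y$ multiplication is as injective/surjective as the Hilbert function allows when WLP-type behavior is only obstructed at one spot, forces $\dim_K A_{s+1} \leq \dim_K A_s$; combined with unimodality this pins the peak at $p = s+1$ (or makes $p$ one of the peak indices), which is all that Proposition \ref{lemD} requires.

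With $p$ identified as a maximizing index, Theorem \ref{thm1} gives that $A$ has the Sperner property, so the first assertion of Proposition \ref{lemD} applies verbatim: the ideal $I_{a,b,c,\alpha,\beta,\gamma} + \mideal^{p} = I_{a,b,c,\alpha,\beta,\gamma} + \mideal^{s+1}$ has the Rees property. For the second assertion, I apply the non-$\mideal$-full criterion from Proposition \ref{lemD} with the integer $s = \frac{1}{3}d - 2 < p$: by hypothesis $I_{a,b,c,\alpha,\beta,\gamma}$ has no generators of degree $s+1$, and by Lemma \ref{lemC} the map $\times y \colon A_s \to A_{s+1}$ is not injective for any linear form $y$. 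These are exactly the two conditions in Proposition \ref{lemD} guaranteeing that $I_{a,b,c,\alpha,\beta,\gamma} + \mideal^{p}$ is not $\mideal$-full, and this finishes the proof.

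The main obstacle, I expect, is the first key step: verifying cleanly that $p = s+1$ is genuinely a maximizing index of the Hilbert function (and not merely close to one), since Proposition \ref{lemD} is stated for a $p$ achieving the actual maximum. The safest route is to write down the Hilbert series of $A$ explicitly from the generator degrees, establish unimodality, and then do the elementary but slightly fiddly arithmetic with $d = a+b+c+\alpha+\beta+\gamma$ to locate the peak exactly at $s+1$; the divisibility $3 \mid d$ supplied by Lemma \ref{lemC} is what makes the center land on an integer and makes this work. Once that arithmetic is in hand, the rest is a formal concatenation of Theorem \ref{thm1}, Proposition \ref{lemD}, and Lemma \ref{lemC}.
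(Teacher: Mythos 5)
Your overall route is the paper's route: take $p=s+1$, apply Proposition \ref{lemD}(i) together with Theorem \ref{thm1} for the Rees property, and Proposition \ref{lemD}(ii) together with the non-injectivity statement of Lemma \ref{lemC} for the failure of $\mideal$-fullness. The part of Proposition \ref{lemD}(ii) you invoke is handled correctly. The gap is in your ``first key step,'' the verification that $\dim_K A_{s+1}=\max_k \dim_K A_k$. You overlooked that this is not something you need to re-derive: it is literally the first assertion of Lemma \ref{lemC}(ii) (quoting \cite[Proposition 10.7]{CN}), namely $\dim_K A_s=\dim_K A_{s+1}=\max\{\dim_K A_k\}$, and the paper's proof of Theorem \ref{thm2} consists of exactly this citation plus Proposition \ref{lemD}.

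The substitute arguments you sketch do not work as stated. An almost complete intersection in three variables is not Gorenstein, so its Hilbert function has no symmetry to exploit, and the Buchsbaum--Eisenbud structure theorem you appeal to concerns Gorenstein codimension-three ideals, not almost complete intersections; so the ``symmetric about the midpoint'' computation is not available. Your fallback --- that failure of injectivity of $\times y: A_s\to A_{s+1}$ for all $y$ forces $\dim_K A_{s+1}\le \dim_K A_s$, which ``combined with unimodality pins the peak at $p=s+1$'' --- points in the wrong direction: an inequality $\dim_K A_{s+1}\le \dim_K A_s$ together with unimodality only guarantees that $s$ is a peak, not that $s+1$ is, and Proposition \ref{lemD}(i) genuinely needs $p=s+1$ to attain the maximum (otherwise you would only conclude the Rees property for $I+\mideal^{s}$). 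Even the unimodality you use is not free; in the paper it comes out of the syzygy-bundle Claim in the proof of Theorem \ref{thm1}, which gives injectivity of $\times(x_1+x_2+x_3)$ below degree $s+1$ and surjectivity above, but that Claim alone still leaves open whether the maximum sits at $s$ or at $s+1$. The repair is simply to quote Lemma \ref{lemC}(ii) in full; with that, your argument collapses to the paper's.
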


To apply the above theorem,
one needs to know when a monomial almost complete intersection ideal in $K[x_1,x_2,x_3]$ fails to have the WLP.
Such a problem was studied in \cite{CN} and \cite{MMN}.
In particular, classes of monomial almost complete intersection ideals in $K[x_1,x_2,x_3]$ which fail to have the WLP
were provided in \cite[Corollary 7.4]{MMN} and in \cite[Theorem 10.9(b)]{CN}.
These results and Theorem \ref{thm2} provide many ideals which have the Rees property but are not $\mideal$-full.
Here we give one simple family of such ideals.

\begin{example} Let $k$ and $\alpha$ be odd integers with $k \geq 2 \alpha +3$ and with $\alpha \geq 3$,
and let
$$I_{k,\alpha}=(x_1^k,x_2^k,x_3^k,x_1^\alpha x_2^\alpha x_3^\alpha) \subset K[x_1,x_2,x_3]. $$
It was proved in \cite[Corollary 7.6]{MMN} that $K[x_1,x_2,x_3]/I_{k,\alpha}$ fails to have the WLP in any characteristic.
Since $I_{k,\alpha}$ has no generators of degree $s+1 =k+\alpha-1$,
the ideal $I_{k,\alpha}+(x_1,x_2,x_3)^{k+\alpha-1}$ has the Rees property but is not $\mideal$-full
by Theorem \ref{thm2}.
\end{example}

It is also possible to construct ideals which have the Rees property but are not $\mideal$-full
in a polynomial ring with more than three variables from Theorem \ref{thm2}.
For example, if $J\subset K[x_1,x_2,x_3]$ is such an ideal given by Theorem \ref{thm2},
then the ideal  $J \cdot T +(x_1y,x_2y,x_3y,y^2)$ of the polynomial ring $T=K[x_1,x_2,x_3,y]$
has these properties.

About the Rees property, it is natural to consider the following stronger property.
We say that an ideal
$I \subset S$ has the {\em strong Rees
property} if, for any ideal $J \supsetneq I $,
one has $\mu(J) < \mu(I)$.
Ideals given by Theorem \ref{thm2} do not satisfy this stronger property (see Remark \ref{rem}). 
However, in Theorem \ref{3.1}, we give infinitely many ideals which have the strong Rees property but are not $\mideal$-full
in the polynomial ring with more than $3$ variables.

\section{Proof of theorems}

Let $S=K[x_1,\dots,x_n]$.
A graded $K$-algebra $A=S/I$ is {\em Artinian} if $\dim_K A < \infty$. 
A {\em monomial Artinian $K$-algebra} is an Artinian graded $K$-algebra $A=S/I$ such that $I$ is a monomial ideal.
The {\em Hilbert function} of a graded $K$-algebra $A$ is the numerical function defined by
$$H(A,k)=\dim_K A_k$$
for $k \in \ZZ_{\geq 0}$.
We say that the Hilbert function of $A$ is {\em unimodal} if there is an integer $p$
such that $H(A,k-1) \leq H(A,k)$ for $k \leq p$ and $H(A,k) \geq H(A,k+1)$ for $k \geq p$.
For a monomial Artinian $K$-algebra $A=S/I$,
let $\MM_k(A)$ be the set of monomials of degree $k$ which are not in $I$,
and let
$$B_k(A)=\big\{ (m,m') \in \MM_{k-1}(A) \times \MM_k(A): m \mbox{ divides }m'\big\}.$$
A subset $M \subset B_k(A)$ is called a {\em full matching} of $B_k(A)$ if
\begin{itemize}
\item[(i)] $(f,g), (f',g') \in M$, $(f,g) \ne (f',g')$ imply $ f \ne f'$ and $g \ne g'$, and
\item[(ii)] $\#M= \min \{ \# \MM_{k-1}(A),\# \MM_k(A)\}$.
\end{itemize}
Here $\#X$ denotes the cardinality of a finite set $X$.
We say that $A$ has a {\em full matching at degree $k$} if there is a full matching of $B_k(A)$.

The following facts are standard in the study of the Sperner property.
See e.g., \cite[Theorem 1.31, Lemma 1.50 and Proposition 2.41]{Book}.

\begin{lemma}
\label{lemA}
If a monomial Artinian $K$-algebra $A$ has the unimodal Hilbert function and has a full matching at degree $k$ for all $k \in \ZZ_{\geq 0}$,
then $A$ has the Sperner property.
\end{lemma}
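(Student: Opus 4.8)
The plan is to extract from the hypotheses the combinatorial core of the Sperner property -- a decomposition of the monomial basis $\MM(A)=\bigcup_k \MM_k(A)$ of $A$ into chains under divisibility -- and then to turn a bound on antichains into a bound on $\mu(J)$ that is valid for an arbitrary (not necessarily monomial) graded ideal $J$. For the first part, use unimodality to fix $p$ with $H(A,p)=\max_k H(A,k)=:N$, and overlay the given full matchings. For each $k\ge 1$ the full matching $M_k$ of $B_k(A)$ saturates the smaller of $\MM_{k-1}(A)$ and $\MM_k(A)$, and unimodality says which: it saturates $\MM_{k-1}(A)$ if $k\le p$ and $\MM_k(A)$ if $k>p$. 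Viewing $\bigcup_k M_k$ as a graph on $\bigcup_k \MM_k(A)$ whose edges join consecutive degrees, each monomial of degree $k$ is the larger member of at most one pair of $M_k$ and the smaller member of at most one pair of $M_{k+1}$, so every connected component is a chain $m_a \mid m_{a+1}\mid \dots \mid m_b$ through consecutive degrees. If $b<p$ then $m_b$ is unmatched from above by $M_{b+1}$, contradicting that $M_{b+1}$ saturates $\MM_b(A)$; dually $a>p$ contradicts that $M_a$ saturates $\MM_a(A)$. Hence $a\le p\le b$, every component meets degree $p$ in exactly one monomial, and $\MM(A)$ is partitioned into exactly $N$ chains.

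Since an antichain in $(\MM(A),\mid)$ meets each of these $N$ chains at most once, it has at most $N$ elements. This already settles monomial ideals $J\subseteq A$: their minimal monomial generators form such an antichain, so $\mu(J)\le N$, while $\mu(\mideal^p A)=\dim_K A_p=N$ is attained.

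The remaining -- and I expect hardest -- point is to handle an arbitrary graded ideal $J\subseteq A$. I would argue directly with a monomial order on $S$ refining degree. Working degree by degree, choose a basis of $(\mideal J)_d$, extend it by homogeneous $g^{(d)}_1,\dots,g^{(d)}_{t_d}$ to a basis of $J_d$ (so $t_d=\dim_K(J/\mideal J)_d$ and $\sum_d t_d=\mu(J)$), and row-reduce the whole list so that all chosen elements of degree $d$ have pairwise distinct leading monomials; in particular $\init(g^{(d)}_j)$ lies outside the initial monomial space of $(\mideal J)_d$. Then $G:=\{\init(g^{(d)}_j)\}_{d,j}$ is an antichain in $\MM(A)$: were $m:=\init(g^{(d)}_i)$ to divide $m':=\init(g^{(d')}_j)$ with $d<d'$, then writing $m'=u m$ gives $u g^{(d)}_i\in(\mideal J)_{d'}$ with leading monomial $m'$, so $m'$ lies in the initial space of $(\mideal J)_{d'}$, contradicting the choice of $g^{(d')}_j$. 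Hence $\mu(J)=\#G\le N$, and with the lower bound $\mu(\mideal^p A)=N$ this is the Sperner property; note that no hypothesis on $\mathrm{char}\,K$ is used. The steps that will require genuine care are the verification that the overlaid matchings form disjoint chains each crossing degree $p$, and the leading-term bookkeeping that produces the antichain $G$.
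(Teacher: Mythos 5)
The paper does not actually prove Lemma \ref{lemA}; it cites it as standard (Theorem 1.31, Lemma 1.50 and Proposition 2.41 of \cite{Book}). Your argument is correct and is essentially a self-contained reconstruction of that standard route, split into the same two halves as the citations: the overlaid-matchings argument (unimodality decides which side each full matching saturates, the union of the matchings decomposes $\MM(A)$ into $N=\dim_K A_p$ divisibility chains each crossing degree $p$, whence every antichain has at most $N$ elements) is the poset-theoretic half, and the leading-monomial construction is exactly the content of the comparison between the Sperner property of the algebra and of the poset $\MM(A)$ (``taking initial ideals does not decrease the number of generators'', which the paper invokes elsewhere). All the delicate steps check out: injectivity of the matchings in each coordinate forces the components to be degree-monotone chains, the endpoint argument is right, and in the algebraic half the key computation $\init(u g)=u\cdot\init(g)$ is valid precisely because $u\cdot\init(g)=m'$ is a standard monomial and term orders are compatible with multiplication. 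Two small points to tighten. First, the paper's definition of the Sperner property ranges over all ideals $J$ of $A$, not only graded ones; your argument covers the general case verbatim if you drop the degree-by-degree bookkeeping: for any ideal $J$, the set $\init(J)\setminus\init(\mideal J)$ consists of $\dim_K J/\mideal J=\mu(J)$ standard monomials, and choosing for each one an element of $J$ with that leading monomial, the same divisibility argument shows they form an antichain. Second, the ``in particular'' after the row reduction needs (and is cleanly replaced by) the standard fact that the leading monomials of a $d$-dimensional subspace of $A$ form a set of exactly $d$ standard monomials, so that $\init\bigl((\mideal J)_d\bigr)$ is exhausted by the leading monomials of any triangularized basis of $(\mideal J)_d$; stating this makes the choice of the $g^{(d)}_j$ immediate. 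Neither point is a gap in substance, and no hypothesis on the characteristic is needed, as you note.
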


\begin{lemma}
\label{lemB}
Let $A$ be a monomial Artinian $K$-algebra and $k$ a positive integer.
If the multiplication $\times y:A_{k-1} \to A_k$ by a linear form $y$ is either injective or surjective then $A$ has a full matching at degree $k$.
\end{lemma}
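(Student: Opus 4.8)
The plan is to translate the hypothesis on the multiplication map into the existence of a nonvanishing maximal minor of its matrix and to read off a full matching from that minor. Fix the monomial $K$-bases $\MM_{k-1}(A)$ of $A_{k-1}$ and $\MM_k(A)$ of $A_k$, and let $M$ be the matrix of $\times y : A_{k-1} \to A_k$ with rows indexed by $m' \in \MM_k(A)$ and columns by $m \in \MM_{k-1}(A)$. Writing $y = \sum_{i=1}^n c_i x_i$, one has $ym = \sum_{i\,:\,x_i m \notin I} c_i\,(x_i m)$ in $A$ for each basis monomial $m$, so the $(m',m)$-entry of $M$ can be nonzero only when $m' = x_i m$ for some $i$; in particular a nonzero entry forces $m \mid m'$, i.e. $(m,m') \in B_k(A)$. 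Hence the support of $M$ is contained in the edge set $B_k(A)$, and any injection hitting only nonzero entries of $M$ is automatically a matching in $B_k(A)$.

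Next I would use the elementary fact that a matrix of full column rank admits a system of distinct representatives: if $\times y$ is injective, the columns of $M$ are linearly independent, so $\rank M = \#\MM_{k-1}(A) \le \#\MM_k(A)$; choosing $\#\MM_{k-1}(A)$ rows, indexed by a subset $S \subseteq \MM_k(A)$, that span an invertible square submatrix, and expanding its determinant by the Leibniz formula, at least one permutation must contribute a nonzero product. That permutation gives a bijection $\phi : \MM_{k-1}(A) \to S$ with $M_{\phi(m),m} \ne 0$, hence $m \mid \phi(m)$, for all $m$. Then $\{(m,\phi(m)) : m \in \MM_{k-1}(A)\}$ satisfies (i) because $\phi$ is injective and (ii) because its cardinality $\#\MM_{k-1}(A)$ equals $\min\{\#\MM_{k-1}(A), \#\MM_k(A)\}$, so it is a full matching at degree $k$. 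The surjective case is the transpose of this argument: surjectivity gives $\rank M = \#\MM_k(A)$, so $M$ has $\#\MM_k(A)$ linearly independent columns indexed by some $T \subseteq \MM_{k-1}(A)$; the square submatrix on the rows $\MM_k(A)$ and the columns $T$ is invertible, and its Leibniz expansion yields a bijection $\tau : T \to \MM_k(A)$ with $M_{\tau(m),m} \ne 0$, hence $m \mid \tau(m)$, for all $m \in T$, so that $\{(m,\tau(m)) : m \in T\}$ is a matching in $B_k(A)$ of cardinality $\#T = \#\MM_k(A) = \min\{\#\MM_{k-1}(A), \#\MM_k(A)\}$, again a full matching.

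The only step that is not pure bookkeeping is the passage from ``the square submatrix is invertible'' to ``there is a permutation realizing a nonzero diagonal'': this is exactly the observation that in $\det = \sum_\sigma \mathrm{sgn}(\sigma)\prod_i M_{\sigma(i),i}$ a nonzero determinant forces some summand, hence every factor of that summand, to be nonzero. I expect no genuine obstacle beyond this. It is worth emphasizing that the argument uses nothing about $y$ being generic — only that the given $y$ makes $\times y$ injective or surjective — and that it works over an arbitrary field $K$.
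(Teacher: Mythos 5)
Your proof is correct: the observation that the matrix of $\times y$ in the monomial bases is supported on the divisibility relation $B_k(A)$, followed by extracting a nonzero maximal minor and a nonvanishing Leibniz term to obtain a system of distinct representatives, is exactly the standard argument, and both the injective and surjective cases (including the count in condition (ii)) are handled properly and work over any field. The paper itself gives no proof of this lemma, citing \cite[Theorem 1.31, Lemma 1.50 and Proposition 2.41]{Book} instead, and your argument is essentially the one underlying that citation, so there is nothing to flag.
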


The next results were proved in \cite{CN,MMN}.

\begin{lemma}
\label{lemC}
Let $I_{a,b,c,\alpha,\beta,\gamma}$ be a monomial almost complete intersection ideal as in \eqref{1}
and $s=\frac 1 3 (a+b+c+\alpha + \beta + \gamma )-2$.
If the characteristic of $K$ is zero and $A=K[x_1,x_2,x_3]/I_{a,b,c,\alpha,\beta,\gamma}$ does not have the WLP, then
\begin{itemize}
\item[(i)] $($\cite[Theorem 6.2]{MMN}$)$ $s$ is an integer.
\item[(ii)] $($\cite[Proposition 10.7]{CN}$)$ $\dim_K A_{s} = \dim_K A_{s+1} = \max\{ \dim_K A_k: k \in \ZZ_{\geq 0}\}$.
Also, the multiplication $\times y: A_s \to A_{s+1}$ is not injective for any linear form $y$ in $K[x_1,x_2,x_3]$.
\item[(iii)] $($\cite[Proposition 10.8]{CN}$)$ $A$ has a full matching at degree $s+1$.
\end{itemize}
\end{lemma}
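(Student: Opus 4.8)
The three assertions are, as the citations indicate, \cite[Theorem 6.2]{MMN}, \cite[Proposition 10.7]{CN} and \cite[Proposition 10.8]{CN}, so proving the lemma amounts to assembling those arguments; the plan below describes the common strategy they rest on. The starting point is that $A=K[x_1,x_2,x_3]/I_{a,b,c,\alpha,\beta,\gamma}$ is a hypersurface section of a monomial complete intersection: writing $R=K[x_1,x_2,x_3]/(x_1^a,x_2^b,x_3^c)$ and $f=x_1^\alpha x_2^\beta x_3^\gamma$, of degree $d:=\alpha+\beta+\gamma\ge 2$, one has $A=R/fR$, and since $R$ is a monomial complete intersection the annihilator of $f$ is again one, namely $(0:_R f)=(x_1^{a-\alpha},x_2^{b-\beta},x_3^{c-\gamma})$; hence $fR\cong R'(-d)$ for $R'=K[x_1,x_2,x_3]/(x_1^{a-\alpha},x_2^{b-\beta},x_3^{c-\gamma})$, and one has a short exact sequence
\[ 0\longrightarrow R'(-d)\longrightarrow R\longrightarrow A\longrightarrow 0 . \]
Both $R$ and $R'$ are Artinian Gorenstein with symmetric unimodal Hilbert functions and socle degrees $e:=a+b+c-3$ and $e-d$ respectively, and in characteristic zero both have the strong Lefschetz property, as is classical for monomial complete intersections. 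Since $3(s+1)=a+b+c+\alpha+\beta+\gamma-3=e+d$, assertion (i) is exactly the divisibility $3\mid e+d$.

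For (i), the plan is to fix a general linear form $\ell$, so that multiplication by $\ell$ has maximal rank in every degree on both $R$ and $R'$, and to transfer this to $A$ by applying the snake lemma to $\times\ell$ on the short exact sequence above. The resulting six-term exact sequence expresses the kernel $(0:_A\ell)_{k-1}$ and the cokernel $(A/\ell A)_k$ in terms of the explicitly known kernels and cokernels of $\times\ell$ on $R$ and on $R'(-d)$, together with one connecting homomorphism. Reading this off shows that $\times\ell:A_{k-1}\to A_k$ automatically has maximal rank unless the connecting map is nonzero, which can happen only in a short window of degrees around the peak of the Hilbert function of $A$; and a genuine failure of the WLP there --- $\times\ell$ neither injective nor surjective for every linear form --- forces the Hilbert function of $A$ to be flat, $\dim_K A_{k-1}=\dim_K A_k$, at the critical degree. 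Feeding this flatness into the identity $\dim_K A_k=\dim_K R_k-\dim_K R'_{k-d}$ together with the symmetry of the Hilbert functions of $R$ and $R'$ pins the critical degree to $s$ and yields $3\mid e+d$; turning this local Hilbert-function constraint into the clean divisibility statement is the technical heart of \cite{MMN}.

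Once $s\in\ZZ$, the same bookkeeping gives (ii): transferring the strong Lefschetz property of $R$ and $R'$ through the short exact sequence shows that, for general $\ell$, the map $\times\ell:A_{k-1}\to A_k$ has maximal rank for every $k\ne s+1$, so the hypothesis that $A$ lacks the WLP forces $\times\ell:A_s\to A_{s+1}$ to fail to be bijective for \emph{every} linear form; the flat-peak analysis of (i) gives $\dim_K A_s=\dim_K A_{s+1}=\max_k\dim_K A_k$ (the Hilbert function being unimodal), and since source and target have equal dimension, failing to be bijective is the same as failing to be injective. For (iii), the plan is to use the combinatorial model of \cite{CN}: the bipartite divisibility graph $B_{s+1}(A)\subseteq\MM_s(A)\times\MM_{s+1}(A)$ is the dimer graph of an explicit region of the triangular lattice, so a full matching is a lozenge tiling of that region; since $\dim_K A_s=\dim_K A_{s+1}$ by (ii) one needs a perfect matching, and this is produced either directly --- match $x_1^i x_2^j x_3^k\in\MM_s(A)$ to $x_1^{i+1} x_2^j x_3^k$ whenever the latter lies in $\MM_{s+1}(A)$, then repair the finitely many boundary defects --- or by verifying Hall's condition on the region. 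The existence of the matching is a purely combinatorial fact about the staircase of $I_{a,b,c,\alpha,\beta,\gamma}$, untouched by the non-generic cancellation that destroys the injectivity of $\times\ell$.

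The main obstacle is step (i): the delicate descent of the Lefschetz property from $R$ and $R'$ to the hypersurface section $A$ through the connecting homomorphism, and the passage from the resulting flat-peak constraint to the divisibility $3\mid a+b+c+\alpha+\beta+\gamma$. Establishing the unimodality of the Hilbert function of $A$ and locating its peak --- also needed in (ii) --- is the other point requiring genuine work; the monomial matching in (iii), by contrast, is essentially explicit once the relevant lattice region has been drawn.
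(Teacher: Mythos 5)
The paper offers no proof of this lemma beyond the citations to \cite[Theorem 6.2]{MMN} and \cite[Propositions 10.7 and 10.8]{CN}, and your proposal defers to exactly the same sources, adding a sketch (hypersurface section of a monomial complete intersection, SLP of monomial complete intersections in characteristic zero transferred through the exact sequence $0\to R'(-d)\to R\to A\to 0$, and the lozenge-tiling/perfect-matching argument for the full matching) that accurately reflects how those references actually argue. So this is correct and essentially the same approach as the paper.
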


We now prove Theorem \ref{thm1}.

\begin{proof}[Proof of Theorem \ref{thm1}]
Since taking initial ideals does not decrease the number of minimal generators,
we only need to consider monomial ideals to prove the Sperner property of a monomial $K$-algebra.
Thus we may assume that the characteristic of $K$ is zero.
Let $I_{a,b,c,\alpha,\beta,\gamma}$ be a monomial almost complete intersection ideal as in \eqref{1}, $s= \frac 1 3 ( a+b+c+\alpha + \beta + \gamma)-2$
and $A=K[x_1,x_2,x_3]/I_{a,b,c,\alpha,\beta,\gamma}$.
If $A$ has the WLP then $A$ has the Sperner property by Lemmas \ref{lemA} and \ref{lemB}.
Suppose that $A$ does not have the WLP.
By Lemma \ref{lemC}(i) and (iii),
$s$ is an integer and $A$ has a full matching at degree $s+1$.

\noindent {\em Claim:} The multiplication
$\times (x_1+x_2+x_3) :  A_{k-1} \to A_k$
is injective for $k < s+1$ and is surjective for $k > s+1$.

\noindent {\em Proof of the Claim:} According to  \cite[Proposition 2.2]{MMN}, the Claim is true for a general linear form $L\in A$ if and only if it is true for $x_1+x_2+x_3$. Let us prove it for a general linear form $L\in A$. To this end, we consider $\mathcal E$  the syzygy bundle  of $I_{a,b,c,\alpha,\beta,\gamma}$, i.e.  $$\mathcal E:=\Ker
({\mathcal O}_{\PP^2}(-a)\oplus {\mathcal O}_{\PP^2}(-b)\oplus {\mathcal O}_{\PP^2}(-c)\oplus {\mathcal O}_{\PP^2}(-\alpha-\beta-\gamma)
\stackrel{(x_1^a, x_2^b, x_3^c, x_1^\alpha x_2^\beta x_3^\gamma)
}{\longrightarrow}
 {\mathcal O}_{\PP^2}) $$ and  $L \cong
\mathbb P^1$  a general line.    By \cite[Theorem 3.3]{BK}, if $I_{a,b,c,\alpha,\beta,\gamma}$
 fails to have the WLP then $\mathcal E$ is semistable.  Furthermore,  the
splitting type of ${\mathcal E}_{\rm norm}:={\mathcal E}(s+2)$ must be $(1,0,-1)$, i.e. $${\mathcal E}_{|L}(s+2)\cong {\mathcal O}_{L}(-1)\oplus {\mathcal O}_{L}\oplus {\mathcal O}_{L}(1)$$
(apply \cite[Theorem 2.2]{BK} and the Grauert-M\"ulich theorem). Therefore, we have
$H^0(L,{\mathcal E}_{|L}(t))=0$ for $t\le s$ and $H^1(L,{\mathcal E}_{|L}(t))=0$ for $t\ge s+2$ and we easily conclude using the exact cohomology sequence

{\small
$$\cdots \longrightarrow H^0(L,{\mathcal E}_{|L}(t))  \longrightarrow  H^1(\PP^2,{\mathcal E}(t-1))\stackrel{\times L
}{\longrightarrow}  H^1(\PP^2,{\mathcal E}(t))  \longrightarrow  H^1(L,{\mathcal E}_{|L}(t)) \longrightarrow \cdots $$ $$\hskip 11mm  \parallel \hskip 26mm \parallel $$ $$\hskip 9mm  A_{t-1} \hskip 22mm A_t $$
}
Finally, note that the above claim and Lemmas \ref{lemA} and \ref{lemB} prove the
desired statement. \end{proof}

\begin{remark}
The surjectivity of the Claim in the above proof has also  been shown in Proposition 9.7 of ``Enumerations deciding the weak Lefschetz property'', the 2011 predecessor of \cite{CN}.
\end{remark}

\begin{remark}
In the above proof, we saw that
any monomial almost complete intersection ideal in $K[x_1,x_2,x_3]$ {\em almost} has the WLP
in the sense that the multiplication of $x_1+x_2+x_3$ is either injective or surjective except for the map between degrees $s$ and $s+1$.
This property does not hold for all almost complete intersection monomial ideals.
For example, if $S=K[x_1,\dots,x_5]$ and $I =(x_1^5,\dots,x_5^5,x_1x_2x_3x_4x_5)$,
then the multiplication $\times (x_1+ \cdots + x_5): (S/I)_k \to (S/I)_{k+1}$ is neither injective nor surjective
for $k=8$ and $k=9$.
\end{remark}

Now Theorem \ref{thm2} follows from Lemma \ref{lemC}(ii) and the next proposition.

\begin{proposition}
\label{lemD}
Let $A=S/I$ be an Artinian graded $K$-algebra and let $p$ be an integer with $\dim_K A_p = \max \{ \dim_K A_k: k \in \ZZ_{\geq 0}\}$.
\begin{itemize}
\item[(i)]
If $A$ has the Sperner property then $I+ \mideal^p$ has the Rees property.
\item[(ii)]
If there is a nonnegative integer $s<p$ such that
$I$ has no generators of degree $s+1$ and the multiplication $\times y : A_s \to A_{s+1}$ is not injective for any linear form $y \in S$,
then the ideal $I+ \mideal^p$ is not $\mideal$-full.
\end{itemize}
\end{proposition}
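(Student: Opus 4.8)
The plan is to treat the two parts separately, working throughout with the graded algebra $A = S/I$ and its quotient $\bar A = A/\mideal^p A = S/(I + \mideal^p)$. Observe first that $\bar A$ agrees with $A$ in degrees $< p$ and vanishes in degrees $\geq p$, so its Hilbert function is the truncation of that of $A$. The number of minimal generators of the ideal $I + \mideal^p$ is
$$\mu(I + \mideal^p) = \mu(I) + \dim_K A_p - (\text{number of generators of }I\text{ in degree }p),$$
but for the argument the cleaner reformulation is: for any graded ideal $L \supset I + \mideal^p$, we have $L \supset \mideal^p$, so $L$ corresponds to an ideal $\bar L$ of $\bar A$, and $\mu(L) = \mu(\bar L)$ since all minimal generators live in degrees $\leq p-1 < p$ (any generator of $\bar L$ has degree at most $p-1$ because $\bar A_{\geq p} = 0$). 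Thus $I + \mideal^p$ has the Rees property in $S$ if and only if the zero ideal of $\bar A$ has the Rees property, i.e. if and only if $\max\{\mu(\bar L) : \bar L \text{ an ideal of } \bar A\} = \mu(\mideal \bar A)$.

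For part (i), suppose $A$ has the Sperner property. Since $\dim_K A_p$ is the maximum value of the Hilbert function of $A$, it is also the maximum value of the Hilbert function of $\bar A$, and it equals $\dim_K \bar A_p{}' $ wait — more precisely $\dim_K \bar A_{p-1} \le \dim_K A_{p-1} \le \dim_K A_p$ and $\mu(\mideal \bar A) = \dim_K \bar A_{p-1} = \dim_K A_{p-1}$; hmm, that need not equal $\dim_K A_p$. So instead I argue as follows: $\max\{\mu(\mideal^k \bar A)\} = \max_k \dim_K \bar A_{k-1} = \max_{k \le p} \dim_K A_{k-1} = \max_k \dim_K A_k = \dim_K A_p$ by the choice of $p$. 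Now take any ideal $\bar L$ of $\bar A$. Lift it to an ideal $L$ of $A$ containing $I$; by the Sperner property of $A$ we get $\mu_A(L) \le \dim_K A_p$. But $\mu_{\bar A}(\bar L) \le \mu_A(L) + \mu_A(\mideal^p A)$ is not quite what I want either. The clean route: use that $\bar L = (L + \mideal^p)/\mideal^p$ and directly bound $\mu(\bar L)$ — a minimal generating set of $\bar L$ in degree $j$ has size $\dim_K (\bar L_j / (\mideal \bar L)_j) \le \dim_K \bar A_j \le \dim_K A_p$, and at most one degree $j$ contributes (no — several degrees can contribute). The correct and standard statement is Lemma \ref{lemA}-style: $\mu(\bar L) \le \max_k \dim_K \bar A_k$ holds whenever $A$ (hence $\bar A$, via its full matchings inherited from those of $A$ in degrees $< p$) has a full matching at every degree and unimodal Hilbert function; so I will instead invoke directly that the Sperner property of $A$ passes to $\bar A$ because truncating at the peak $p$ preserves unimodality and preserves all full matchings at degrees $k \le p$ — and at degree $p$ itself one uses $\dim_K \bar A_p = 0$. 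Hence $\bar A$ has the Sperner property, which by the reformulation above is exactly the Rees property of $I + \mideal^p$.

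For part (ii), assume $s < p$, that $I$ has no generator in degree $s+1$, and that $\times y : A_s \to A_{s+1}$ is non-injective for every linear $y$. Suppose for contradiction that $I + \mideal^p$ is $\mideal$-full, so $\mideal(I + \mideal^p) : y = I + \mideal^p$ for some linear form $y$. Passing to $\bar A$, this says $0 :_{\bar A} y = \mideal \bar A : y / (\ldots)$ — concretely, multiplication by $y$ on $\bar A$ has the property that $(\mideal \bar A) : y = \mideal \bar A$, equivalently $\times y : \bar A / \mideal \bar A \to \bar A$, equivalently the socle-type condition forcing $\times y : \bar A_{k-1} \to \bar A_k$ to be injective whenever $\bar A_k \ne 0$ has no new relation — the key consequence I want is that $\mideal$-fullness of $I + \mideal^p$ forces $\times y : A_{s} \to A_{s+1}$ to be injective. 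Here is why: in degrees $\le p-1$, $\bar A$ coincides with $A$, and the condition $\mideal I' : y = I'$ for $I' = I + \mideal^p$ implies (as in \cite[Theorem 3]{Wa}) that for each $k < p$ with $I'$ having no degree-$k$ generator, $\times y : A_{k-1} \to A_k$ is injective; apply this with $k = s+1 < p$, using the hypothesis that $I$ (hence $I + \mideal^p$, since $s+1 < p$) has no generator in degree $s+1$. This contradicts the assumed non-injectivity of every $\times y$ in degrees $s \to s+1$. Therefore $I + \mideal^p$ is not $\mideal$-full.

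The main obstacle is pinning down the precise bookkeeping in part (i) — showing cleanly that the Sperner property descends from $A$ to the truncation $\bar A$, and handling ideals $\bar L$ whose minimal generators are spread across several degrees — and, in part (ii), extracting from the single equation $\mideal(I+\mideal^p):y = I+\mideal^p$ the degreewise injectivity of $\times y$ at the one spot $s \to s+1$; this is the technical heart and will lean on the characterization of $\mideal$-fullness in \cite{Wa} together with the hypothesis that there is no generator in degree $s+1$.
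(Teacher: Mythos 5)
Your part (i) does not work as written. The opening reduction is false: for a graded ideal $L \supset I+\mideal^p$ it is not true that $\mu_S(L)=\mu_{\bar A}(\bar L)$ where $\bar A = S/(I+\mideal^p)$ — already $L = I+\mideal^p$ gives $\bar L = 0$ but $\mu(L)$ large, and in general $\mu_S(L)=\mu_{\bar A}(\bar L)+\dim_K\bigl((I+\mideal^p+\mideal L)/\mideal L\bigr)$ — so the Rees property of $I+\mideal^p$ is not equivalent to the statement about $\bar A$ you reduce to. Your eventual fallback, that the Sperner property ``descends'' to the truncation because truncation preserves unimodality and full matchings, assumes structure the hypotheses do not provide: the proposition concerns an arbitrary Artinian graded $K$-algebra, and unimodality plus full matchings are merely sufficient conditions for the Sperner property (Lemma~\ref{lemA}, stated for monomial algebras), not consequences of it; and even if $\bar A$ were Sperner, the link back to the Rees property goes through the false reformulation. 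The paper's proof avoids all of this by decomposing relative to $I$, not relative to $I+\mideal^p$: with $L=I+\mideal^p$ and $J\supset L$, one has $\mu(J)=\dim_K\bigl(J/(I+\mideal J)\bigr)+\dim_K\bigl((I+\mideal J)/\mideal J\bigr)$; the first term equals $\mu_A(J/I)\le \dim_K A_p=\mu_A(L/I)$ by the Sperner property of $A$, and the second term can only decrease when $J$ grows since $\mideal J\supset\mideal L$. You never arrive at a decomposition of this kind, and without it the bookkeeping you acknowledge struggling with is not resolved.

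Part (ii) has two genuine gaps. First, $\mideal$-fullness requires only some $y\in\mideal$, not a linear form; you must also exclude $y$ of order at least $2$, which the paper does in one line ($\mideal^{p-1}\subset\mideal(I+\mideal^p):y$ while $\mideal^{p-1}\not\subset I+\mideal^p$). Second, your key step is an implication, attributed to \cite{Wa} but not proved, that injectivity of $\times y$ holds in every degree $k<p$ in which $I+\mideal^p$ has no generator, and you apply it with $k=s+1<p$; but the hypothesis only gives $s<p$, and the case that matters for Theorem~\ref{thm2} and Theorem~\ref{3.1} is exactly $s+1=p$, where $I+\mideal^p$ does have generators in degree $s+1$ and your reduction collapses. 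The paper's direct computation is uniform in $s<p$: non-injectivity yields $f\notin I$ of degree $s$ with $fy\in I_{s+1}=(\mideal I)_{s+1}\subset\mideal(I+\mideal^p)$, hence $f\in\mideal(I+\mideal^p):y$, while $f\notin(I+\mideal^p)_s=I_s$ because $s<p$; this uses only that $I$ (not $I+\mideal^p$) has no generator in degree $s+1$. Your underlying idea for (ii) is the right one, but as written it neither covers the boundary case nor the non-linear $y$, and the cited injectivity criterion is left unestablished.
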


\begin{proof}
(i)
Let $L=I+\mideal^p$ and let $J \supset L$ be a homogeneous ideal of $S$.
What we must prove is that $\mu(J) \leq \mu (L)$.
Since $S/I$ has the Sperner property,
we have
$$\dim_K(J/(I+\mideal J))=\mu(J/I) \leq \mu(L/I)=\dim_K(L/(I+\mideal L)).$$
Then, since $\mideal J \supset \mideal L$, we have
\begin{align*}
\mu(J)&=\dim_K(J/(I+\mideal J)) + \dim_K((I+\mideal J )/\mideal J)\\
&\leq \dim_K(L/(I+\mideal L)) + \dim_K((I+\mideal L )/\mideal L)=\mu(L),
\end{align*}
as desired.

(ii)
We prove that, for any $y \in \mideal$, one has
$$\mideal(I+\mideal^p):y \ne I + \mideal^p.$$
If $\deg y \geq 2$ then $\mideal(I+\mideal^p):y  \supset \mideal^{p-1}$ but
$I+\mideal^{p} \not \supset \mideal^{p-1}$.
Suppose that $y$ is a linear form.
Since the multiplication $\times y :A_s \to A_{s+1}$ is not injective,
there is a polynomial $f \not \in I$ of degree $s$ such that $fy \in I_{s+1}$.
Since $I$ has no generators of degree $s+1$, we have $I_{s+1} = (\mideal I)_{s+1}$.
Hence $f \in \mideal(I+\mideal^{s+1}):y$ but $f \not \in I+ \mideal^{s+1}$, which implies the desired property.
\end{proof}

\begin{remark}
\label{rem}
The ideal $I=I_{a,b,c,\alpha,\beta,\gamma}+\mideal^{s+1}$ in Theorem \ref{thm2} does not have the strong Rees property.
Indeed, Lemma \ref{lemC}(ii) says that the value of the Hilbert function of $K[x_1,x_2,x_3]/I_{a,b,c,\alpha,\beta,\gamma}$ has equal values in degrees $s$ and $s+1$.
Thus, $J = I_{a,b,c,\alpha,\beta,\gamma} + \mathfrak m^s $ contains $I$
but has the same number of minimal generators as $I$.
\end{remark}

\section{Ideals with the strong Rees property}

The purpose of this section is to prove the following result.

\begin{theorem}
\label{3.1}
Let $N \geq 5$ be an integer, $n \geq 4$ an even number, $d= \frac n 2 (N-2)$,
$S=K[x_1,\dots,x_n]$, and
$$I=(x_1^N,x_2^N,x_1^{N-2}x_2^{N-2})+(x_3^{N-1},\dots,x_n^{N-1}) \subset S.$$
Then the ideal $I+\mideal^{d+1}$ has the strong Rees property but is not $\mideal$-full.
\end{theorem}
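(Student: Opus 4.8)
The plan is to realize $A:=S/I$ as a tensor product and to treat the two assertions separately.

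\textbf{Step 1 (reduction to a tensor product).} Since $I=I_1S+I_2S$ with $I_1=(x_1^N,x_2^N,x_1^{N-2}x_2^{N-2})\subset K[x_1,x_2]$ and $I_2=(x_3^{N-1},\dots,x_n^{N-1})\subset K[x_3,\dots,x_n]$, one has $A\cong B\otimes_K C$ with $B=K[x_1,x_2]/I_1$ and $C=K[x_3,\dots,x_n]/I_2$; hence $h_A(t)=h_B(t)h_C(t)$ and the poset of monomials of $A$ under divisibility is the product $P_A=P_B\times P_C$. Here $B$ has Hilbert function $(1,2,\dots,N-1,N,N-1,\dots,4,2)$ (socle degree $2N-4$), while $C$ is a monomial complete intersection, so $h_C$ is symmetric with socle degree $m:=(n-2)(N-2)$, and $2d=(2N-4)+m$ is the socle degree of $A$. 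I would next check, by a direct (if tedious) generating-function computation — both $h_B$ and $h_C$ are log-concave with no internal zeros, hence so is their product, and one verifies $h_{A,d}<h_{A,d+1}>h_{A,d+2}$ — that $h_A$ is strictly unimodal with its unique maximum in degree $d+1$. I would also record that $\mu(I)=n+1$ and that every minimal generator of $I$ has degree at most $2N-4<d+1$, so that $I$ has no generator in degree $d+1$ and $\mu(I+\mideal^{d+1})=\dim_K A_{d+1}+\mu(I)$.

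\textbf{Step 2 (strong Rees property).} This reduces to the claim that $\mideal^{d+1}A$ is the \emph{only} ideal $\bar J$ of $A$ with $\mu_A(\bar J)=\dim_K A_{d+1}$, where $\mu_A$ counts generators of an ideal of $A$. Indeed, given $J\supsetneq L:=I+\mideal^{d+1}$, put $\bar J=J/I\supsetneq \mideal^{d+1}A$; from the filtration $\mideal J\subseteq I+\mideal J\subseteq J$ one gets $\mu(J)=\mu_A(\bar J)+\dim_K\bigl((I+\mideal J)/\mideal J\bigr)\le \mu_A(\bar J)+\mu(I)$, while $\mu(L)=\dim_K A_{d+1}+\mu(I)$ by Step 1, and the claim then forces $\mu_A(\bar J)<\dim_K A_{d+1}$, so $\mu(J)<\mu(L)$. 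To prove the claim I would first pass to a monomial ideal: if $\mu_A(\bar J)=\dim_K A_{d+1}=\max_k\dim_K A_k$, then $\mathrm{in}(\bar J)$ has the same Hilbert function and $\mu_A(\mathrm{in}(\bar J))\ge\mu_A(\bar J)$ is still maximal, so it suffices to treat monomial $\bar J$, whose minimal generators form an antichain $X$ of $P_A$ with $|X|=\dim_K A_{d+1}$. The key input is that $P_A$ is an LYM poset: $P_C$ is a product of chains, hence has the normalized matching property; $P_B=\{0,\dots,N-1\}^2\setminus\{N-2,N-1\}^2$ is a small explicit poset for which the normalized matching property is checked by hand; and a product of normalized-matching posets is again normalized matching. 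Then the LYM inequality $\sum_{x\in X}1/\dim_K A_{\deg x}\le 1$, together with $|X|=\max_k\dim_K A_k$ and the strict unimodality of $h_A$, forces every $x\in X$ to lie in degree $d+1$, so $X$ is the entire $(d+1)$st rank and $\bar J=\mideal^{d+1}A$. (This also gives the Sperner property of $A$, i.e. $\max_{\bar K}\mu_A(\bar K)=\dim_K A_{d+1}$, used implicitly above.)

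\textbf{Step 3 (failure of $\mideal$-fullness).} By Proposition \ref{lemD}(ii) with $p=d+1$ and $s=d$ (legitimate since $I$ has no generator in degree $d+1$), it suffices to show that $\times y\colon A_d\to A_{d+1}$ is not injective for any linear form $y$, and by \cite[Proposition 2.2]{MMN} this reduces to $y=\ell:=(x_1+x_2)+(x_3+\cdots+x_n)=:\ell_B+\ell_C$. Assume $\mathrm{char}\,K=0$. In $B$ one computes that $\ell_B^{\,2N-4}\cdot 1\ne 0$ (its class equals $\binom{2N-4}{N-3}\bigl(x_1^{N-3}x_2^{N-1}+x_1^{N-1}x_2^{N-3}\bigr)$) while $\ell_B^{\,2N-3}\cdot 1=0$ since $B_{2N-3}=0$; thus $1\in B_0$ generates a cyclic $K[\ell_B]$-module $U\cong K[z]/(z^{2N-3})$ concentrated in degrees $0,\dots,2N-4$. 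Set $c_0:=\tfrac m2-(N-2)\ge 0$; since the monomial complete intersection $C$ has the strong Lefschetz property, $\ell_C^{\,2N-4}\colon C_{c_0}\to C_{m-c_0}$ is bijective and $\ell_C^{\,2N-3}\colon C_{c_0}\to C_{m-c_0+1}$ is not injective, so one can choose $0\ne w\in C_{c_0}$ with $\ell_C^{\,2N-3}w=0$; then $\ell_C^{\,2N-4}w\ne 0$, so $w$ generates a cyclic $K[\ell_C]$-module $W\cong K[z]/(z^{2N-3})$ concentrated in degrees $c_0,\dots,m-c_0$. Now $U\otimes_K W\subseteq B\otimes_K C=A$ is a $K[\ell]$-module isomorphic to $K[z]/(z^{2N-3})\otimes_K K[z]/(z^{2N-3})$ with $\ell$ acting as $z\otimes 1+1\otimes z$ and $1\otimes 1$ placed in degree $c_0$; by the nilpotent Clebsch--Gordan rule, the kernel of $z\otimes 1+1\otimes z$ is nonzero in every internal degree of the interval $[\,2N-4,\,4N-8\,]$, and $d-c_0=3N-6$ lies in this interval. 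Hence $\ker(\times\ell\colon A_d\to A_{d+1})\ne 0$, which is what was needed, so $I+\mideal^{d+1}$ is not $\mideal$-full.

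\textbf{Main obstacle.} The crux is Step 3 — pinning down that $B\otimes_K C$ fails the weak Lefschetz property \emph{exactly} between degrees $d$ and $d+1$ — since everything else is formal once $P_A$ is understood; the point is that although $B$ (two variables, characteristic $0$) and $C$ (a monomial complete intersection) each enjoy the strong Lefschetz property, their Hilbert functions are ``off-center'' by one relative to each other, so the Clebsch--Gordan decomposition of the Jordan type of $\ell_B\otimes 1+1\otimes\ell_C$ on $B\otimes_K C$ produces a Jordan block ending precisely in degree $d$. The strict-unimodality computation for $h_A$ and the verification that $P_B$ has the normalized matching property are secondary, routine but in need of care; I would also want to revisit whether Step 3 can be made characteristic-free, the potential obstructions being the binomial coefficients above and the strong Lefschetz property of $C$.
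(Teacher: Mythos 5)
Your Step 2 architecture (tensor/poset product, normalized matching, antichain argument pinning down the unique ideal with the maximal number of generators) is essentially the paper's, and your Step 3 is a genuinely different, self-contained route where the paper simply cites \cite[Proposition 7.1.3]{BMMNZ}; but as written there are genuine gaps. The most serious is the assertion that ``a product of normalized-matching posets is again normalized matching'': this is false in general. For instance, take $P$ with ranks $\{a_1,a_2\}<\{b\}<\{c_1,c_2\}$ (all relations present) and $Q$ a two-element chain; both are normalized matching, but in $P\times Q$ the set $V=\{(a_1,q_1),(a_2,q_1)\}$ has $\nabla V=\{(b,q_1)\}$, violating \eqref{NMP}. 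The correct product theorem (Harper \cite{Ha}, Hsieh--Kleitman \cite{HK}, the paper's Lemma \ref{3.3}) requires both factors to be log-concave in addition to LYM/normalized matching; fortunately you verify log-concavity of $h_B$ and $h_C$ in Step 1, so the fix is to invoke that theorem rather than the unrestricted claim --- which is exactly what the paper does. Two further points in this half: the inequality $h_{A,d}<h_{A,d+1}$ is not a routine aside but precisely the nontrivial numerical content the paper imports from \cite{BMMNZ} (in the smallest case $N=5$, $n=4$ the two values are $55$ and $56$), so ``one verifies'' leaves the real work undone; and your route additionally needs $h_{A,d+1}>h_{A,d+2}$ to get a unique peak, whereas the paper's Corollary \ref{3.5} sidesteps all information above degree $d+1$ by truncating at $\mideal^{d+2}$.

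In Step 3 your Clebsch--Gordan argument is attractive and, in characteristic $0$, essentially sound, but it has two gaps and one restriction. First, the existence of $w$ is not a consequence of the strong Lefschetz property alone: SLP gives maximal rank, so $\ell_C^{2N-3}\colon C_{c_0}\to C_{m-c_0+1}$ fails to be injective only if $\dim_K C_{m-c_0+1}<\dim_K C_{c_0}$, i.e.\ only if $h_C$ strictly decreases at distance $N-2\ge 3$ past its center; this strict unimodality of the rank numbers of a product of chains is true here but needs an argument, which you do not supply. Second, $d-c_0=2(N-2)=2N-4$, not $3N-6$; this is harmless (it is the left endpoint of your interval, corresponding to the singleton Jordan block), but it should be corrected since the whole point is that the kernel appears exactly in degree $d$. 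Third, your argument uses characteristic $0$ (nonvanishing of $\binom{2N-4}{N-3}$, SLP of $C$, the Clebsch--Gordan splitting), while Theorem \ref{3.1} carries no characteristic hypothesis and the paper's proof, via \cite[Proposition 7.1.3]{BMMNZ} and Proposition \ref{lemD}(ii) applied to $S/(I+\mideal^{d+2})$, does not need one; as you acknowledge, your proof of the non-$\mideal$-full half is therefore of a weaker statement until this is addressed. With the log-concavity hypothesis restored in the product theorem, the two Hilbert-function inequalities actually proved, and the strictness of $h_C$ at $c_0$ established, your proposal does yield the theorem (in characteristic $0$ for the $\mideal$-fullness part).
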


To prove the above result, we need a combinatorial technique called the LYM property which appears in the theory of partially ordered sets.
We refer the readers to \cite{An} for basics on this theory.

Throughout the remainder of the paper, we assume that every partially ordered set (poset for short) is finite.
Let $P = \bigcup_i P_i$ be a ranked poset with $P_i=\{ a \in P: \rank a=i\}$.
Two elements $a,b \in P$ are {\em comparable} if $a>b$ or $a<b$ in $P$.
A subset $A \subset P$ is called an {\em antichain} of $P$ if no two elements in $A$ are comparable.
Let $\mathcal A(P)$ be the set of all antichains of $P$.
The poset $P$ is called {\em Sperner} if
$$\max \{ \#A : A \in \mathcal A(P)\} = \max\{ \# P_k: k \in \ZZ_{\geq 0}\}.$$
The Sperner property of posets and that of monomial Artinian $K$-algebras are related as follows:
For a monomial Artinian  $K$-algebra $A=S/I$,
the set $\mathcal M(A)=\bigcup_k \mathcal M_k(A)$ of all monomials in $S$ which are not in $I$
forms a poset with divisibility as its order.
Then it is known that the algebra $A$ is Sperner if and only if the poset $\mathcal M (A)$ is Sperner (see \cite[Proposition 2.41]{Book}).

Next, we recall the LYM property.
Let $P =\bigcup_{i=0}^s P_i$ be a ranked poset.
We say that $P$ has the {\em LYM property}
if, for any antichain $A$ of $P$, one has
$$
\sum_{a \in A} \frac 1 {\#P_{\rank a}} \leq 1.
$$
It is easy to see that the LYM property implies the Sperner property.
In fact, if $A \subset P$ is an antichain and $P$ has the LYM property, then one has
$$\# A \times \frac 1 {\max \{\# P_k: k \in \ZZ_{\geq 0}\} } \leq \sum_{a \in A} \frac 1 {\# P_{\rank a}} \leq 1,$$
which implies that $P$ is Sperner.
The LYM property has another characterization.
For a subset $V \subset P_k$, let
$$\nabla V=\{ b \in P_{k+1}: b > a \mbox{ for some }a \in V\}.$$
The poset $P =\bigcup_{i=0}^s P_i$ is said to have the {\em normalized matching property} if,
for any $k=0,1,\dots,s-1$ and for any subset $V \subset P_k$, one has
\begin{align}
\label{NMP}
\frac {\# V} {\# P_k} \leq \frac {\# \nabla V} {\# P_{k+1}}.
\end{align}
The following fact is standard in Sperner theory.
See \cite[Theorem 2.3.1]{An}.

\begin{lemma}
\label{3.2}
A ranked poset $P$ has the LYM property if and only if $P$ has the normalized matching property.
\end{lemma}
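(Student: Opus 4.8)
The plan is to prove the two implications separately, with the ``normalized matching $\Rightarrow$ LYM'' direction carrying essentially all of the work. For the easy direction, \emph{LYM $\Rightarrow$ normalized matching}, I would argue directly. Fix $k$ and a subset $V \subset P_k$, and consider the set
$$A = V \cup (P_{k+1} \setminus \nabla V).$$
First I would check that $A$ is an antichain: two elements of the same rank are incomparable (since $a<b$ forces $\rank a < \rank b$), and an element $v \in V \subset P_k$ cannot lie below any $w \in P_{k+1} \setminus \nabla V$, because $w > v$ would place $w$ in $\nabla V$ by definition. Applying the LYM inequality to $A$ and splitting the sum over the two ranks gives
$$\frac{\# V}{\# P_k} + \frac{\# P_{k+1} - \# \nabla V}{\# P_{k+1}} \leq 1,$$
which rearranges at once to \eqref{NMP}.

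For the converse, \emph{normalized matching $\Rightarrow$ LYM}, the idea is to build a single probability distribution on maximal chains of $P$ under which every element of rank $k$ is visited with probability exactly $1/\# P_k$. The first step, which I expect to be the main obstacle, is to produce for each consecutive pair of levels a nonnegative weighting $f_k(a,b)$ on the comparabilities $a < b$ (with $a \in P_k$, $b \in P_{k+1}$) whose row sums are all equal to $1/\# P_k$ and whose column sums are all equal to $1/\# P_{k+1}$. This is exactly the feasibility of a transportation problem with uniform supply $1/\#P_k$ at level $k$, uniform demand $1/\#P_{k+1}$ at level $k+1$, and total mass $1$ on each side; by max-flow--min-cut (equivalently, the deficiency form of Hall's theorem) such a weighting exists if and only if $\frac{\# V}{\# P_k} \leq \frac{\# \nabla V}{\# P_{k+1}}$ holds for every $V \subset P_k$, i.e.\ precisely \eqref{NMP}. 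In particular, \eqref{NMP} applied to singletons and to $V = P_k$ forces every element of $P_k$ to have an upper cover and every element of $P_{k+1}$ to have a lower cover, so that every element lies on some maximal chain $c_0 < c_1 < \cdots < c_s$.

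I would then glue the $f_k$ into a Markov chain: start uniformly on $P_0$, and from $a \in P_k$ move to $b$ with probability $\# P_k \, f_k(a,b)$; the row-sum normalization makes these genuine transition probabilities, and positivity of the row sums guarantees the walk never gets stuck before reaching $P_s$. A one-line induction using the column-sum normalization then shows that the marginal distribution at each level is uniform, so the probability that a random maximal chain passes through a fixed element $a$ equals $1/\# P_{\rank a}$. Finally, since a maximal chain is itself a chain, it meets a given antichain $A$ in at most one element; hence the events ``the chain passes through $a$'', for $a \in A$, are pairwise disjoint, and summing their probabilities yields
$$\sum_{a \in A} \frac{1}{\# P_{\rank a}} \leq 1,$$
which is the LYM property. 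Combining the two implications proves the lemma.
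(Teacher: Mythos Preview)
Your argument is correct. Note, however, that the paper does not actually supply a proof of Lemma~\ref{3.2}: it quotes the result as a standard fact in Sperner theory and refers the reader to \cite[Theorem~2.3.1]{An}. So there is no in-paper argument to compare against.

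For what it is worth, the route you take is one of the classical proofs of this equivalence. The antichain $V\cup(P_{k+1}\setminus\nabla V)$ is the standard witness for the direction LYM $\Rightarrow$ NMP, and your verification that it is an antichain is complete. For NMP $\Rightarrow$ LYM, your reduction to a balanced transportation problem between consecutive rank levels is exactly right: with uniform supplies $1/\#P_k$ and uniform demands $1/\#P_{k+1}$, max-flow--min-cut shows feasibility is equivalent to \eqref{NMP}, and gluing the resulting doubly-normalized flows into a Markov chain on saturated chains yields uniform marginals at every rank. The disjointness of the events ``the chain meets $a$'' for $a$ ranging over an antichain then gives the LYM inequality. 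This is essentially the Harper--Kleitman regular-chain-cover argument, which is also the approach in Anderson's book.
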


We say that a ranked poset $P$ is {\em log-concave} if $(\#P_i)^2 \geq (\#P_{i-1}) (\#P_{i+1})$ for all $i$.
The following result of Harper \cite{Ha}, which is reproved by Hsieh--Kleitman \cite{HK}, is known as the product theorem for the LYM property.

\begin{lemma}
\label{3.3}
Let $P$ and $Q$ be ranked posets.
If $P$ and $Q$ have the LYM property and are log-concave,
then so does their Cartesian product $P \times Q$.
\end{lemma}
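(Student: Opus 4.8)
The plan is to establish both conclusions of the product theorem, namely that $P \times Q$ has the LYM property \emph{and} is log-concave, since both are needed if the lemma is to be iterated to products of more than two factors. By Lemma \ref{3.2} it is equivalent to work with the normalized matching property throughout, and my first step is to record the standard flow reformulation of that property: a ranked poset $R = \bigcup_i R_i$ satisfies the normalized matching property if and only if, for each $k$, there is a nonnegative weighting $f_k$ of the cover relations between $R_k$ and $R_{k+1}$ with $\sum_{b} f_k(a,b) = 1/\#R_k$ for every $a \in R_k$ and $\sum_a f_k(a,b) = 1/\#R_{k+1}$ for every $b \in R_{k+1}$. This equivalence is the max-flow--min-cut theorem (equivalently, Gale's feasibility criterion for transportation problems) applied to the bipartite Hasse graph in which each element of $R_k$ supplies mass $1/\#R_k$ and each element of $R_{k+1}$ demands mass $1/\#R_{k+1}$: a flow saturating all supplies and demands exists precisely when $\#V/\#R_k \le \#\nabla V/\#R_{k+1}$ for every $V \subseteq R_k$, which is exactly \eqref{NMP}. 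I would fix, once and for all, such normalized flows $f^P_i$ on $P$ and $f^Q_j$ on $Q$, whose existence is guaranteed by the hypotheses.

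The log-concavity of $P \times Q$ I would treat first and separately, since it also furnishes the quantitative input for the matching step. Writing $P = \bigcup_{i=0}^r P_i$, $Q = \bigcup_{j=0}^t Q_j$, $p_i = \#P_i$ and $q_j = \#Q_j$, the rank numbers of the product are the convolution $m_k = \#(P\times Q)_k = \sum_{i+j=k} p_i q_j$. Since the $p_i$ and $q_j$ are strictly positive on intervals of integers and log-concave, a standard computation (expanding $m_k^2 - m_{k-1}m_{k+1}$ and regrouping the terms into the nonnegative $2\times 2$ log-concavity minors of $p$ and of $q$) shows that the convolution is again log-concave with no internal zeros. In particular the ratios $p_i/p_{i+1}$ and $q_j/q_{j+1}$ are nondecreasing, and every level $(P\times Q)_k$ with $0 \le k \le r+t$ is nonempty.

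For the normalized matching property of the product I would build a normalized flow between consecutive levels $k$ and $k+1$ directly from $f^P_i$ and $f^Q_j$. Each cover relation in $P \times Q$ is of $P$-type, $(a,b) \lessdot (a',b)$ with $a \lessdot a'$, or of $Q$-type, $(a,b) \lessdot (a,b')$ with $b \lessdot b'$, so I would set $F\big((a,b),(a',b)\big) = A_{i,j}\, f^P_i(a,a')$ and $F\big((a,b),(a,b')\big) = B_{i,j}\, f^Q_j(b,b')$, where $i = \rank a$, $j = \rank b$, $i+j=k$, and $A_{i,j}, B_{i,j} \ge 0$ are constants to be chosen. The outflow condition at $(a,b)$ reads $A_{i,j}/p_i + B_{i,j}/q_j = 1/m_k$, which I would enforce by writing $A_{i,j}/p_i = \phi_{i,j}/m_k$ and $B_{i,j}/q_j = (1-\phi_{i,j})/m_k$ for a split fraction $\phi_{i,j} \in [0,1]$; the inflow condition at a level-$(k+1)$ cell then becomes, after clearing $m_k$, the linear recurrence
\[
\phi_{i,j}\cdot\frac{p_i}{p_{i+1}} + (1-\phi_{i+1,j-1})\cdot\frac{q_{j-1}}{q_j} = \frac{m_k}{m_{k+1}}
\]
relating the split fractions of the two level-$k$ cells that feed a common level-$(k+1)$ cell, together with boundary equations coming from the extreme cells (those with $i=0$ or $j=0$), which receive inflow of only one type.

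The \emph{main obstacle} is precisely the feasibility of this system: the recurrence together with its boundary conditions determines the $\phi_{i,j}$, and the construction yields an honest flow if and only if every $\phi_{i,j}$ lies in $[0,1]$. This is where log-concavity is indispensable. I would argue that the monotonicity of the ratios $p_i/p_{i+1}$ and $q_j/q_{j+1}$ established above forces the solutions of the recurrence to stay in $[0,1]$ and makes the over-determined boundary conditions mutually consistent; intuitively, log-concavity prevents any single block $P_i \times Q_j$ from being required to emit or absorb more normalized mass than a probability permits. Once the split fractions are shown to be admissible, $F$ is a normalized flow between every pair of consecutive levels of $P \times Q$, so by the flow reformulation and Lemma \ref{3.2} the product has the LYM property, which together with the log-concavity of $m_k$ completes the proof.
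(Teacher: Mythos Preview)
The paper does not prove this lemma at all: it is quoted as a known result of Harper, reproved by Hsieh--Kleitman, and used as a black box. Your proposal therefore goes beyond what the paper does, and the normalized-flow construction you describe is in fact the Hsieh--Kleitman strategy.

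There is, however, a genuine gap precisely at the step you yourself flag as the main obstacle. Having set up the recurrence for the split fractions $\phi_{i,j}$, you assert that monotonicity of $p_i/p_{i+1}$ and $q_j/q_{j+1}$ ``forces the solutions of the recurrence to stay in $[0,1]$ and makes the over-determined boundary conditions mutually consistent,'' and follow this only with an intuitive sentence. That is not an argument, and this verification is the entire content of the theorem. Two remarks. First, the consistency worry is a red herring: total outflow from level $k$ and total inflow at level $k+1$ are both equal to $1$, so one of the inflow equations is redundant and the linear system has a unique solution once a single boundary value is imposed. The genuine difficulty is not solvability but \emph{feasibility}, i.e.\ $0\le\phi_{i,j}\le 1$. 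Second, to establish feasibility one actually has to do something: in the standard treatments one either writes the unique solution in closed form (as a ratio of partial sums $\sum p_{i'}q_{j'}$ along the two diagonals) and then bounds it termwise using the log-concavity inequalities $p_ip_{i'+1}\le p_{i+1}p_{i'}$ for $i\le i'$ and their $q$-analogues, or one runs an induction along the diagonal showing that each step of the recurrence preserves the interval $[0,1]$. Without one of these, the proof stops short of its decisive step.
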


We now consider monomial Artinian $K$-algebras.
We say that a monomial Artinian $K$-algebra $A$ has the LYM property if the poset $\mathcal M(A)$ has the LYM property.

\begin{lemma}
\label{3.4}
Let $A=S/I$ be a monomial Artinian $K$-algebra having the LYM property.
Suppose that there is an integer $p$ such that $\dim A_p > \dim A_k$ for all $k \ne p$.
If $J$ is an ideal of $A$ satisfying $\mu(J) = \dim_k (A_p)$ then $J = (I + \mathfrak m^p)/I$.
\end{lemma}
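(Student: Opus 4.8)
The plan is to exploit the LYM property in its ``antichain weight'' formulation together with the strict unimodality hypothesis $\dim A_p > \dim A_k$ for $k\neq p$. The key observation is that a minimal generating set of an ideal $J$ of $A$ corresponds, after passing to the quotient $S$-module, to an antichain in the poset $\mathcal M(A)$: if $m, m'$ are two distinct minimal generators of $J$ (as monomials outside $I$ representing classes in $A$), then neither can divide the other, since a minimal generator is by definition not in the ideal generated by the others, and in a monomial algebra $m \mid m'$ would force $\bar m' \in \mathfrak m \bar m \subset J\mathfrak m$. So a minimal monomial generating set $G$ of $J$ is an antichain $A_G \subset \mathcal M(A)$ with $\#A_G = \mu(J)$.

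Next I would apply the LYM inequality to this antichain:
$$\sum_{a \in A_G} \frac{1}{\#\mathcal M_{\rank a}(A)} \leq 1.$$
Since $\#\mathcal M_k(A) = \dim_K A_k$ and, by the strict unimodality hypothesis, $\dim_K A_k < \dim_K A_p$ for every $k \neq p$, every summand with $\rank a \neq p$ is strictly larger than $1/\dim_K A_p$, while summands with $\rank a = p$ equal exactly $1/\dim_K A_p$. Therefore, writing $A_G = A_G' \sqcup A_G''$ where $A_G'$ consists of the generators of degree $\neq p$ and $A_G''$ of those of degree $p$, we get
$$\frac{\#A_G'' }{\dim_K A_p} + \sum_{a \in A_G'} \frac{1}{\#\mathcal M_{\rank a}(A)} \leq 1,$$
and each term in the remaining sum is $> 1/\dim_K A_p$. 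If $\#A_G' \geq 1$, the left-hand side strictly exceeds $(\#A_G'' + \#A_G')/\dim_K A_p = \mu(J)/\dim_K A_p = 1$ by hypothesis, a contradiction. Hence $A_G' = \emptyset$: every minimal generator of $J$ has degree exactly $p$, and there are exactly $\dim_K A_p$ of them, i.e.\ they form a $K$-basis of $A_p$.

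It then follows that $J \supseteq A_p$ (all of degree $p$) and $J$ is generated in degree $p$, so $J = A_p \oplus A_{p+1} \oplus \cdots$ as a graded $K$-vector space; equivalently $J = \mathfrak m^p A = (I + \mathfrak m^p)/I$. The main obstacle I anticipate is the bookkeeping in the first step — verifying cleanly that minimal generators of an ideal in a \emph{monomial} Artinian algebra really do form an antichain in $\mathcal M(A)$, and in particular that one may choose the minimal generators to be (classes of) monomials; this uses that $I$ is a monomial ideal so that $J$ can be taken monomial as well, or at least that $\dim_K(J/\mathfrak m J) = \mu(J)$ is computed by a monomial basis. Once that correspondence is in hand, the LYM estimate does all the work, and the strictness of the unimodality is exactly what forces every generator into degree $p$.
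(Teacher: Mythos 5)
Your proposal is correct and takes essentially the same route as the paper: the minimal generators of $J$ form an antichain in $\mathcal M(A)$, the LYM inequality is squeezed between $1=\#G(J)/\dim_K A_p$ and $1$, and the strict maximality of $\dim_K A_p$ forces every generator into degree $p$, giving $J=(I+\mathfrak m^p)/I$. The monomial-reduction issue you flag is resolved in the paper exactly in the spirit of your first suggestion, by passing to the initial ideal of $J$, which does not decrease the number of minimal generators.
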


\begin{proof}
Let $J$ be an ideal of $A$ with $\mu(J)=\dim_K A_p$.
Since taking initial ideals does not decrease the number of generators, we may assume that $J$ is a monomial ideal.
Let $G(J)$ be the set of minimal monomial generators of $J$.
Then $G(J)$ forms an antichain of $\mathcal M(A)$.
Since $\mathcal M(A)$ has  the LYM property,
we have
$$1 = \frac {\# G(J)} {\dim_K A_p} \leq \sum_{u \in G(J)} \frac 1 {\dim_K A_{\deg u}} \leq 1.$$
This implies
$\deg u=p$ for all $u \in G(J)$
since $\dim_K A_p > \dim_K A_k$ for all $k \ne p$,
and $J=(I+\mideal^p)/I$.
\end{proof}

The properties of ideals in an Artinian $K$-algebra with the largest number of generators were studied in \cite{IW},
and essentially the same result was proved in \cite[Proposition 17]{IW}.

\begin{corollary}
\label{3.5}
Let $A=S/I$ be a monomial Artinian $K$-algebra having the LYM property
and let $p>0$ be an integer satisfying that $\dim_K A_p > \dim_K A_k$ for all $k <p$.
Then $I+\mideal^p$ has the strong Rees property.
\end{corollary}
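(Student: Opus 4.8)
The plan is to reduce Corollary~\ref{3.5} to Lemma~\ref{3.4} by a truncation trick that removes the unwanted assumption. In Lemma~\ref{3.4} one needs that the degree-$p$ piece of $A$ is \emph{strictly} the largest among all degrees, while in the corollary we only assume it strictly dominates the degrees $k < p$. First I would replace $A$ by its truncation at degree $p$: concretely, set $A' = S/(I+\mideal^{p+1})$, equivalently the quotient of $A$ by all components of degree $> p$. This kills every higher-degree piece, so now $\dim_K A'_p = \dim_K A_p > \dim_K A_k = \dim_K A'_k$ for every $k \neq p$, which is exactly the hypothesis of Lemma~\ref{3.4}. The one thing to check here is that $A'$ still has the LYM property; but $\mathcal{M}(A')$ is just the order ideal of $\mathcal{M}(A)$ obtained by deleting the top ranks $\mathcal{M}_{p+1}(A), \mathcal{M}_{p+2}(A), \dots$, and deleting a final segment of ranks from a ranked poset with the normalized matching property visibly preserves that property (the inequalities \eqref{NMP} that survive are unchanged), so by Lemma~\ref{3.2} the LYM property descends to $A'$.

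Next I would set up the Rees-property comparison. Let $L = I + \mideal^p$ and let $J \supsetneq L$ be any homogeneous ideal; I must show $\mu(J) < \mu(L)$. Note $\mu(L) = \mu((I+\mideal^p)/I) + (\text{generators of }I\text{ not absorbed})$, and since $I$ has no generators in degree $p$ or higher beyond what is already there, the image $\bar{L} = (I+\mideal^p)/I = \mideal^p A$ is generated by the $\dim_K A_p$ monomials of degree $p$; more precisely $\mu(L) = \mu_{<p}(I) + \dim_K A_p$ where $\mu_{<p}(I)$ counts generators of $I$ of degree $< p$. The key point is that passing to $A$ (quotient by $I$) is harmless: $\mu(J) \le \mu_{<p}(I) + \mu(\bar{J})$ where $\bar{J} = (J+I)/I = J/I$, with equality-or-less because $\mideal J \supseteq \mideal I$ exactly as in the proof of Proposition~\ref{lemD}(i). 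So it suffices to prove $\mu(\bar{J}) \le \dim_K A_p$ with strict inequality whenever $\bar{J} \supsetneq \bar{L} = \mideal^p A$.

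For that, I would invoke the Sperner/LYM bound for $A$ itself. The minimal generators of $\bar{J}$ (after passing to an initial ideal, which does not decrease $\mu$) form an antichain in $\mathcal{M}(A)$, so by the LYM property $\mu(\bar{J}) \le \max_k \dim_K A_k$; but $p$ need not be the global max of the Hilbert function of $A$, so this only gives $\mu(\bar{J}) \le \max_k \dim_K A_k$, not $\le \dim_K A_p$. This is the real obstacle, and it is exactly why the truncation is needed: I would instead run the argument inside $A'$. Since $\mideal^p$ kills nothing below degree $p$, we have $\bar{L} = \mideal^p A$ maps to $\mideal^p A' = (A')_p$, and for $\bar{J} \supseteq \bar{L}$ its image $\bar{J}'$ in $A'$ satisfies $\mu(\bar{J}) \le \mu(\bar{J}')$ again because quotienting only identifies generators; in fact $\mu(\bar{J}) = \mu(\bar{J}')$ once $\bar J \supseteq \mideal^p A$, since the only generators we could lose live in degree $\ge p+1$ and those are all absorbed into $\mideal\cdot(A')_p \subseteq \mideal \bar{J}'$. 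Now apply Lemma~\ref{3.4} to $A'$: either $\mu(\bar{J}') < \dim_K (A')_p = \dim_K A_p$, or $\mu(\bar{J}') = \dim_K (A')_p$ and then $\bar{J}' = (A')_p = \mideal^p A'$, forcing $\bar{J} \subseteq$ (preimage of $\mideal^p A'$) $= \mideal^p A = \bar{L}$, hence $J \subseteq L$, contradicting $J \supsetneq L$. Either way $\mu(J) < \mu(L)$, proving the strong Rees property. Finally, to see $I + \mideal^p$ is not $\mideal$-full one is \emph{not} claiming that here — Corollary~\ref{3.5} asserts only the strong Rees property — so the proof ends at this point; the non-$\mideal$-fullness in the applications (Theorem~\ref{3.1}) is handled separately via Proposition~\ref{lemD}(ii).
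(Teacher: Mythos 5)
Your proposal is correct and follows essentially the same route as the paper: replace $I$ by $I+\mideal^{p+1}$ (truncation preserves the normalized matching property, hence the LYM property by Lemma \ref{3.2}), apply Lemma \ref{3.4} to the truncated algebra to identify $\mideal^p$ as the unique ideal attaining $\mu = \dim_K A_p$, and then run the generator-counting decomposition from the proof of Proposition \ref{lemD}(i) to get the strict inequality. Apart from minor bookkeeping slips (the constant should count generators of $I$ of degree $\le p$, and the relevant containment is $\mideal J \supseteq \mideal L$), your write-up just makes explicit the details the paper leaves implicit.
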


\begin{proof}
Since $S/(I+\mideal^{p+1})$ has the LYM property by Lemma \ref{3.2},
by replacing $I$ with $I+\mideal^{p+1}$ if necessary, we may assume that $A$ satisfies the assumption of Lemma \ref{3.4}.
Then the statement follows in the same way as in the proof of Proposition \ref{lemD}(i).
\end{proof}

Now we prove the main result of this section.

\begin{proof}[Proof of Theorem \ref{3.1}]
About the ideal $I$,
the following facts were proved in \cite[Proposition 7.1.3]{BMMNZ}.
\begin{itemize}
\item[(a)] $\dim_K (S/I)_k <\dim_K (S/I)_{k+1}$ for $k=0,1,\dots,d$, where $ d= \frac n 2 (N-2)$.
\item[(b)] The multiplication $\times y: (S/I)_d \to (S/I)_{d+1}$ is not injective for any linear form $y \in S$.
\end{itemize}
By applying Proposition \ref{lemD}(ii) to the $K$-algebra $S/(I+\mideal^{d+2})$,
the conditions (a) and (b) imply that $I+ \mideal^{d+1}$ is not $\mideal$-full.
We prove that $I+\mideal^{d+1}$ has the strong Rees property.
By the condition (a) and Corollary \ref{3.5},
what we must prove is that $S/I$ has the LYM property.
We prove that the poset $\mathcal M(S/I)$ has the LYM property.

Let $A=K[x_1,x_2]/(x_1^N,x_2^N,x_1^{N-2}x_2^{N-2})$.
Observe
$$\mathcal M (S/I) = \mathcal M(A) \times \mathcal M (K[x_3,\dots,x_n]/(x_3^{N-1},\dots,x_n^{N-1})).$$
Since the poset $\mathcal M (K[x_3,\dots,x_r]/(x_3^{N-1},\dots,x_n^{N-1}))$
is isomorphic to a divisor lattice (see \cite[\S 1.4.2]{Book}), which has the LYM property and is log-concave (see \cite{Ha}),
by Lemma \ref{3.3}, what we must prove is that the poset
$\mathcal M(A)$ has the LYM property and is log-concave.
It is easy to see that $\mathcal M(A)$ is log-concave since its rank function forms the sequence
$$(1,2,3,\dots,N-1,N,N-1,N-2,\dots,5,4,2,0).$$
We prove that $\mathcal M(A)$ has the normalized matching property.
Observe that $\mathcal M(A)$ has rank $2(N-2)$.
Since $\mathcal M(A)$ and $\mathcal M (K[x_1,x_2]/(x_1^N,x_2^N))$
have the same components up to rank $2(N-2)-1$
and since $\mathcal M (K[x_1,x_2]/(x_1^N,x_2^N))$ has the normalized matching property,
to check the normalized matching property of $\mathcal M(A)$,
we only need to check the condition \eqref{NMP} for $k=2(N-2)-1$.
This is straightforward since
$$
\mathcal M_{2(N-2)-1}(A)=\{x_1^{N-1}x_2^{N-4},x_1^{N-2}x_2^{N-3},x_1^{N-3}x_2^{N-2},x_1^{N-4}x_2^{N-1}\}
$$
and
$$
\mathcal M_{2(N-2)}(A)=\{x_1^{N-1}x_2^{N-3},x_1^{N-3}x_2^{N-1}\}.
$$
Hence $\mathcal M(A)$ has the LYM property by Lemma \ref{3.2}.
\end{proof}

\end{document}